\theoremstyle{plain}
\newtheorem{thm}{Theorem}[section]
\newtheorem{prop}[thm]{Proposition}
\newtheorem{rem}[thm]{Remark}
\numberwithin{equation}{section}
      \def\cI{{\mathcal I}}
      \def\cL{{\mathcal L}}
\def\cM{{\mathcal M}}      
\def\cS{{\mathcal S}}   \def\cT{{\mathcal T}}
\title{\bf An algebraic treatment of the Pastro polynomials on the real line}
\renewcommand*{\Affilfont}{\normalsize\small}
\author[1]{Vutha Vichhea Chea}
\author[2]{Luc Vinet}
\author[3]{Meri Zaimi}
\author[4]{Alexei Zhedanov\vspace{.5em}}
\affil[1,2,3]{Centre de Recherches Math\'ematiques, Universit\'e de Montr\'eal, P.O. Box 6128, Centre-ville Station, Montr\'eal (Qu\'ebec), H3C 3J7, Canada. \vspace{.5em}}
\affil[2]{Insitut de valorisation des donn\'ees (IVADO), Montr\'eal (Qu\'ebec), H2S 3H1, Canada. \vspace{.5em}}
\affil[4]{School of Mathematics, Renmin University of China, Beijing 100872, China. \newline\vspace{.9em}}
	\renewcommand\AB@affilsepx{: \protect\Affilfont}
	\affil[ ]{E-mail addresses}
	\renewcommand\AB@affilsepx{, \protect\Affilfont}
	\affil[1]{vutha.vichhea.chea@umontreal.ca}
	\affil[2]{luc.vinet@umontreal.ca}
	\affil[3]{meri.zaimi@umontreal.ca}
	\affil[4]{zhedanov@yahoo.com}
\begin{document}

\date{\today} 
\maketitle

\begin{abstract}
\noindent The properties of the Pastro polynomials on the real line are studied with the help of a triplet of $q$-difference operators. The $q$-difference equation and recurrence relation these polynomials obey are shown to arise as generalized eigenvalue problems involving the triplet of operators, with the Pastro polynomials as solutions. Moreover, a discrete biorthogonality relation on the real line for the Pastro polynomials is obtained and is then understood using adjoint operators. The algebra realized by the triplet of $q$-difference operators is investigated.      
\end{abstract}

\section{Introduction} \label{sect intro}

The purpose of this paper is to study the bispectrality and biorthogonality of the Pastro polynomials on the real line using a set of $q$-difference operators with bidiagonal actions.   

The Pastro polynomials were introduced in \cite{Pas85} as an infinite family of biorthogonal polynomials on the complex unit circle with respect to the $q$-beta distribution (see also \cite{AI}). They are expressed in terms of a two-parameter $q$-hypergeometric series of type ${}_2\phi_1$. Their biorthogonality in the case where $q$ is a root of unity has been studied in \cite{Zhe98}. The Pastro polynomials belong to the class of Laurent biorthogonal polynomials, which can be characterized by a three-term recurrence relation of the form \cite{Hen-Ros86,Ism-Mas95,Vin-Zhe01,Zhe98} 
\begin{equation}
    P_{n+1}(z) + \mu_n^{(1)}P_n(z) = z(P_n(z) + \mu_n^{(2)}P_{n-1}(z)). \label{rel rec Pastro}
\end{equation} 
In addition to this recurrence relation, they also satisfy a $q$-difference equation that can be recovered from that of the little $q$-Jacobi polynomials through a reparametrization which depends on the degree $n$ \cite{Koekoek}. Therefore, the Pastro polynomials are said to be bispectral, in a generalized sense. 

Past developments have shown that biorthogonal functions with bispectral properties can be understood algebraically in terms of operators in a way analogous to the classical orthogonal polynomials (OPs) of the ($q$-)Askey scheme. Indeed, all classical OPs satisfy both a recurrence relation and a ($q$-)difference or differential equation that can be viewed as two eigenvalue problems for two operators $X$ and $Y$ which act in a tridiagonal fashion either on the degree or the variable \cite{Koekoek}. The OPs then arise as solutions to these eigenvalue problems. The pair of bispectral operators $X,Y$ realizes (a specialization of) the Askey--Wilson algebra \cite{Zhe91} and is known as a Leonard pair in the finite dimensional case \cite{Ter01}. The OPs appear as overlap coefficients in the representation theory of their associated algebra. In the case of biorthogonal functions, one deals instead with a triplet of operators $X,Y,Z$ and the bispectrality is understood in terms of two generalized eigenvalue problems (GEVPs) involving the pairs $X,Y$ and $X,Z$ \cite{Zhe99GEVP}. This algebraic approach has been used for studying several families of bispectral biorthogonal functions: the Hahn and $q$-Hahn rational functions \cite{TVZ21,VZ_Hahn,BGVZ}, the Askey polynomials on the unit circle \cite{VZ_Ask} and the $R_I$ polynomials of Hahn type \cite{VZZ}. The goal of this paper is to adopt a similar approach for understanding the properties of the Pastro polynomials. We will be interested here to the case where the domain is restricted to the real line instead of being the complex plane. This will lead to a different biorthogonality relation for the Pastro polynomials in comparison to previous works.               

The rest of the paper continues as follows. Three $q$-difference operators, that act bidiagonally on the monomial basis, are introduced in Section \ref{sect XYZ}. A generalized eigenvalue problem (GEVP) generated by a pair of these operators is considered in Section \ref{sect Pastro GEVP}. Its polynomial solutions will be shown to be given by the Pastro polynomials. The bispectral properties of the Pastro polynomials are obtained in Section \ref{sect bispec} using the actions of the three $q$-difference operators. Then a discrete biorthogonality relation on the real line for these polynomials is obtained in Section \ref{sect biorth} via two methods. The first method makes use of Baxter's recurrence system \cite{Bax61} while the second method involves a pair of operators and their adjoints. Finally, the algebra realized by the triplet of $q$-difference operators is investigated in Section \ref{sect Pastro alg}. A connection with the $q$-Hahn algebra is made explicit. Section \ref{sect conc} consists of concluding remarks. 

\section{A triplet of $q$-difference operators} \label{sect XYZ}

We begin with the introduction of three $q$-difference operators, crucial for studying the properties of the Pastro polynomials. Throughout this paper, $q$ is assumed generic and real. 

Let $a,b$ be two real parameters. We define the following operators acting on the space $\cM$ of real functions $f(x)$ depending on a real variable $x$:  
\begin{align}
    & \label{def X} X^{(a,b)} = (x-q)T^- + (q-bx)\cI, \\
    & \label{def Y} Y^{(a,b)} = (x-a^{-1}q)T^+ + (a^{-1}q-b^{-1}x)\cI, \\
    & \label{def Z} Z^{(a,b)} = (1-qx^{-1})T^- + (qx^{-1}-b)\cI.
\end{align}
The notations $T^\pm$ and $\cI$ are used to denote the $q$-shift operators and the identity operator respectively. They are defined as
\begin{equation}
    T^\pm f(x) = f(q^{\pm1}x), \quad \cI f(x) = f(x).
\end{equation}
For convenience, we will omit the dependencies of the triplet of operators on the parameters $a,b$ and write $X,Y,Z$ instead.
From the explicit expressions \eqref{def X} and \eqref{def Z}, it is obvious that
\begin{equation}
    Z = x^{-1}X. \label{ZX}
\end{equation}

In what follows, we will consider more specifically the subspace of functions $f(x)\in\cM$ which are polynomials. A basis of interest for this subspace is the monomial basis
\begin{equation}
    \varphi_n(x) = x^n,
\end{equation}
where $n$ is any non-negative integer. All operators $X$, $Y$ and $Z$ act bidiagonally on this basis:
\begin{align}
    & \label{act X} X\varphi_n(x) = (q^{-n}-b)\varphi_{n+1}(x) + q(1-q^{-n})\varphi_n(x), \\
    & \label{act Y} Y\varphi_n(x) = (q^n-b^{-1})\varphi_{n+1}(x) +a^{-1}q(1-q^n)\varphi_n(x), \\
    & \label{act Z} Z\varphi_n(x) = (q^{-n}-b)\varphi_{n}(x) + q(1-q^{-n})\varphi_{n-1}(x).
\end{align}
It is observed that the operators $X,Y,Z$ raise the degree of any polynomial by at most one, and hence belong to the class of generalized Heun operators \cite{BVZ,Vin-Zhe19}.

\section{Pastro polynomials as solutions of a generalized eigenvalue problem} \label{sect Pastro GEVP}

In this section, we demonstrate how the Pastro polynomials arise as solutions of a GEVP built from the pair of operators $X$ and $Y$. We use the standard notations for the $q$-hypergeometric function ${}_r\phi_s$ and the $q$-Pochhammer symbol $(z;q)_n$, see \cite{GR,Koekoek}.

\begin{prop}
	The normalized polynomial solutions $P_n(x)$ of the GEVP
	\begin{equation}
    	YP_n(x) = \lambda_nXP_n(x) \label{GEVP XY} 
	\end{equation}
	are given by the Pastro polynomials  
	\begin{equation}
    	P_n(x) = P_n(x;a,b;q) = \frac{(a^{-1}bq^{1-n};q)_n(q;q)_n}{(q^{-n};q)_n(b;q)_n} \ _2\phi_1\left({q^{-n},b\atop a^{-1}bq^{1-n}};q,x\right) \label{def Pastro} 
	\end{equation}
with eigenvalues
\begin{equation}
    \lambda_n = -b^{-1}q^{n}, \label{lambda}
\end{equation}
for $n=0,1,2,\dots$
\end{prop}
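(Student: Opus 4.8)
The plan is to solve the GEVP \eqref{GEVP XY} directly in the monomial basis. Since $X$ and $Y$ each raise the degree by at most one, a degree-$n$ candidate $P_n(x)=\sum_{k=0}^{n}c_k\,\varphi_k(x)$ (with $c_n\neq0$) makes both sides of \eqref{GEVP XY} polynomials of degree at most $n+1$; I would expand $XP_n$ and $YP_n$ with the bidiagonal actions \eqref{act X} and \eqref{act Y} and equate the coefficient of $\varphi_m$ for each $m$. Setting $c_{-1}=c_{n+1}=0$, this yields the two-term recurrence
\[
c_{m-1}\bigl[(q^{m-1}-b^{-1})-\lambda_n(q^{-(m-1)}-b)\bigr]=c_m\,q\bigl[\lambda_n(1-q^{-m})-a^{-1}(1-q^{m})\bigr],
\]
for $m=0,1,\dots,n+1$; the line $m=0$ reduces to $0=0$, so this is a system of $n+1$ equations in the $n+1$ unknowns $c_0,\dots,c_{n-1},\lambda_n$ once the overall scale of $P_n$ is fixed.

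The topmost equation, $m=n+1$, reads $c_n\bigl[(q^{n}-b^{-1})-\lambda_n(q^{-n}-b)\bigr]=0$, and $c_n\neq0$ forces $\lambda_n=-b^{-1}q^{n}$, i.e.\ \eqref{lambda} (indeed $-b^{-1}q^{n}(q^{-n}-b)=q^{n}-b^{-1}$). Substituting this value back, the left bracket factors as $(1-q^{n-m+1})(q^{m-1}-b^{-1})$ and the right one as $q(1-q^{m})(b^{-1}q^{n-m}-a^{-1})$, so for $1\le m\le n$ the recurrence collapses, after cancelling common powers of $q$, to
\[
\frac{c_m}{c_{m-1}}=\frac{(1-q^{m-1-n})(1-bq^{m-1})}{(1-a^{-1}bq^{m-n})(1-q^{m})}.
\]
This is exactly the term ratio of the series ${}_2\phi_1\!\left({q^{-n},b\atop a^{-1}bq^{1-n}};q,x\right)$, so $c_m$ is proportional to $(q^{-n};q)_m(b;q)_m\big/\bigl((a^{-1}bq^{1-n};q)_m(q;q)_m\bigr)$; the factor $(q^{-n};q)_m$ vanishes for $m>n$, consistent with $\deg P_n=n$. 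Hence a degree-$n$ polynomial solution exists and is unique up to an overall constant.

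It remains to fix that constant by the normalization: reading off the leading coefficient shows that the prefactor in \eqref{def Pastro} is precisely the one making $P_n$ monic, so imposing $c_n=1$ reproduces \eqref{def Pastro}. Throughout, genericity of $q,a,b$ guarantees that no denominator above vanishes (in particular $b\neq q^{-n}$, so $\deg XP_n=n+1$, and $q^{m}\neq1$, $a^{-1}bq^{m-n}\neq1$ for the relevant $m$). The one genuinely computational step is the factorization of the two brackets after inserting $\lambda_n=-b^{-1}q^{n}$ and the matching with the ${}_2\phi_1$ term ratio, which needs rewritings such as $1-q^{m-1-n}=-q^{m-1-n}(1-q^{n-m+1})$; I expect this routine sign/shift bookkeeping to be the only place where an error could slip in, everything else being formal.
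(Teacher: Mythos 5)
Your proposal is correct and follows essentially the same route as the paper's proof: expanding in the monomial basis, extracting $\lambda_n=-b^{-1}q^{n}$ from the top coefficient, and reducing the remaining equations to the two-term ratio $c_m/c_{m-1}=\frac{(1-q^{m-1-n})(1-bq^{m-1})}{(1-a^{-1}bq^{m-n})(1-q^{m})}$, which matches the paper's recurrence \eqref{two-term rec} exactly. Your factorizations of the two brackets after inserting $\lambda_n$ check out, so nothing further is needed.
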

\begin{proof}
	Any polynomial solution to equation \eqref{GEVP XY} can be expanded in the basis of the monomials $\varphi_k(x)$. Therefore, one can take
\begin{equation}
    P_n(x) = \sum_{k=0}^{n} C_{n,k} \ \varphi_k(x), \label{exp Pastro} 
\end{equation}
where $n$ is a non-negative integer and $C_{n,k}$ are some coefficients. The expansion \eqref{exp Pastro} and the bidiagonal actions \eqref{act X} and \eqref{act Y} of $X$ and $Y$ can be used to develop each side of equation \eqref{GEVP XY} in terms of the basis functions $\varphi_k(x)$. Equalizing the coefficients of the terms proportional to $\varphi_{n+1}(x)$, one finds that the associated eigenvalue $\lambda_n$ must satisfy equation \eqref{lambda}. The other terms lead to the following two-term recurrence relation for the coefficients $C_{n,k}$, for $k=0,1,\dots,n-1$:
\begin{equation}
    (1-q^{k-n})(1-bq^k)C_{n,k} = (1-a^{-1}bq^{k+1-n})(1-q^{k+1})C_{n,k+1}. \label{two-term rec}
\end{equation}
Note that the explicit expression \eqref{lambda} of the eigenvalue $\lambda_n$ has been used to obtain \eqref{two-term rec}. The solution to this recurrence relation is found to be
\begin{equation}
    C_{n,k} = \frac{(q^{-n};q)_k(b;q)_k}{(a^{-1}bq^{1-n};q)_k(q;q)_k}C_{n,0}. \label{Cnk}
\end{equation}
The solution \eqref{def Pastro} then follows by substituting \eqref{Cnk} into the expansion \eqref{exp Pastro}, and by choosing the normalization constant $C_{n,0}$ so as to provide monic condition \begin{equation}
    P_n(x) = x^n + O(x^{n-1}).
\end{equation}
\end{proof}
Using the explicit expression \eqref{def Pastro} and the actions \eqref{act X}, \eqref{act Y} and \eqref{act Z}, it can be shown that the triplet of operators $X,Y,Z$ have the effect of producing a $q$-shift on the paramater $b$ when acting on the Pastro polynomials $P_n(x;a,b;q)$. More precisely, we have
\begin{align}
    & \label{ctg rel X} XP_n(x;a,b;q) = q^{-n}(1-bq^n)xP_n(x;a,bq;q), \\
    & \label{ctg rel Y} YP_n(x;a,b;q) = -b^{-1}(1-bq^n)xP_n(x;a,bq;q), \\
    & \label{ctg rel Z} ZP_n(x;a,b;q) = q^{-n}(1-bq^n)P_n(x;a,bq;q).
\end{align}
\begin{rem}
	The actions \eqref{ctg rel X}--\eqref{ctg rel Z} can be written in terms of the transformation 
\begin{equation}
	\cS : b \mapsto bq	 \label{transf S}
\end{equation}
by replacing the polynomials $P_n(x;a,bq;q)$ which appear on the RHSs by $\cS P_n(x;a,b;q)$.
\end{rem}

\section{Bispectrality} \label{sect bispec}

Let us now show how the bispectrality of the Pastro polynomials is expressed through  two GEVPs: a $q$-difference equation in the variable $x$ with eigenvalue depending on the degree $n$ and a recurrence relation in the degree $n$ with eigenvalue depending on the variable $x$.

\subsection{$q$-Difference equation}

Consider again the GEVP \eqref{GEVP XY}: $YP_n = \lambda_nXP_n$. We have shown in Section \ref{sect Pastro GEVP} that the Pastro polynomials $P_n(x)$ arise as solutions to this GEVP with eigenvalues $\lambda_n = -b^{-1}q^{n}$. Observe that this is consistent with the actions \eqref{ctg rel X} and \eqref{ctg rel Y} of the operators $X$ and $Y$ on $P_n(x)$. Using the defining expressions \eqref{def X} and \eqref{def Y} of the operators $X$ and $Y$ in the GEVP \eqref{GEVP XY}, one gets
\begin{equation}
    (x-a^{-1}q)P_n(qx) + (a^{-1}q-b^{-1}x)P_n(x)=-b^{-1}q^{n}\left((x-q)P_n(q^{-1}x) + (q-bx)P_n(x)\right), \label{diff eq Pastro}
\end{equation}
which is a second order $q$-difference equation for the Pastro polynomials $P_n(x)$.

\subsection{Recurrence relation}

We already know that the operators $X$, $Y$ and $Z$ all act bidiagonally in the monomial basis $\varphi_n(x)$. Consider now the action of these operators on the basis of the Pastro polynomials $P_n(x)$. From the contiguity relations of the $_2\phi_1$ series in particular, one finds that the actions \eqref{ctg rel X} and \eqref{ctg rel Z} can be re-expressed as follows:
\begin{align}
    & \label{act rec X} XP_n(x) = q^{-n}(1-bq^{n})P_{n+1}(x) + q(1-a^{-1}bq^{-n})P_n(x), \\
    & \label{act rec Z} ZP_n(x) = q^{-n}(1-bq^{n})P_{n}(x) + \frac{bq(1-q^{-n})(1-aq^{n-1})}{a(1-bq^{n-1})}P_{n-1}(x).
\end{align}
Note that the action of $Y$ on the basis $P_n(x)$ is deduced from \eqref{act rec X} using the GEVP \eqref{GEVP XY}.
Applying both sides of equality \eqref{ZX} on the polynomials $P_n(x)$, we get
\begin{equation}
    ZP_n(x) = x^{-1}XP_n(x). \label{XZPn}
\end{equation}
Then, using the actions \eqref{act rec X} and \eqref{act rec Z} of the operators $X$ and $Z$, equation \eqref{XZPn} can be written as
\begin{equation}
    P_{n+1}(x) + \mu_n^{(1)}P_n(x) = x\left(P_n(x) + \mu_n^{(2)}P_{n-1}(x)\right), \label{same rec rel Pastro}
\end{equation}
where the coefficients $\mu_n^{(1)}$ and $\mu_n^{(2)}$ are given by
\begin{equation}
    \mu_n^{(1)} = -\frac{q(b-aq^n)}{a(1-bq^n)}, \quad \mu_n^{(2)} = -\frac{bq(1-q^n)(1-aq^{n-1})}{a(1-bq^n)(1-bq^{n-1})}. \label{mu}
\end{equation}
Equation \eqref{same rec rel Pastro} is a three-term recurrence relation for the Pastro polynomials $P_n(x)$. Note the presence of the term proportional to $P_{n-1}(x)$ with a factor $x$ which does not appear for classical OPs but which is expected for Laurent biorthogonal polynomials \cite{Hen-Ros86,Ism-Mas95,Zhe98}.

We thus see that both the $q$-difference equation \eqref{diff eq Pastro} and the recurrence relation \eqref{same rec rel Pastro} of the Pastro polynomials can be understood through two GEVPs involving the pairs of operators $X,Y$ and $X,Z$; this is how the triplet $X,Y,Z$ accounts for the bispectrality of the Pastro polynomials.   Note that in order to view \eqref{XZPn} as a GEVP, one should define $X$ and $Z$ as operators acting on the degree $n$ using \eqref{act rec X} and \eqref{act rec Z}.

\section{Biorthogonality} \label{sect biorth}

This section is devoted to the proof of the following result.
\begin{prop}\label{prop:biorth}
	Let $N$ be a positive integer and take the parameter $a$ to be
	\begin{equation}
    a = q^{1-N}. \label{restric 1}
\end{equation}
	Define moreover the following $q$-grid on the real line:
\begin{equation}
    x_s = q^{s+1}, \quad s = 0, 1, \dots, N-1. \label{q-grid}
\end{equation} 
	 Then, the Pastro polynomials $P_n(x;a,b;q)$ satisfy the biorthogonality relation 
	 \begin{equation}
    \sum_{s=0}^{N-1}w_sP_n(x_s)R_m(x_s) = h_n\delta_{mn}, \quad m,n=0,1,\dots,N-1, \label{rel biorth2}
\end{equation}
where the weight function $w_s$ and normalization constant $h_n$ are given by
\begin{align}
    &w_s = \frac{1}{(b;q)_{N-1}}\frac{(q^{1-N};q)_s}{(q;q)_s}(bq^{N-1})^s, \label{full eq weight} \\
    &h_n = \frac{(a;q)_n(q;q)_n}{(ab^{-1}q;q)_n(b;q)_n}, \label{full norm h} 
\end{align}
and where the normalized biorthogonal partners $R_n(x)$ are given by the Laurent polynomials
\begin{equation}
    R_n(x) = R_n(x;a,b;q) = \frac{(q^{-n};q)_n(bq^{-1};q)_n}{(a^{-1}bq^{-n};q)_n(q;q)_n} \ _2\phi_1\left({q^{-n},ab^{-1}q\atop b^{-1}q^{2-n}};q,\frac{q^2}{ax}\right). \label{full poly Q}
\end{equation}
\end{prop}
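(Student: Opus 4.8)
The plan is to reduce the biorthogonality \eqref{rel biorth2} to two families of one-sided conditions plus a single normalization, and then to establish these by two independent routes. The constraint $a=q^{1-N}$ is precisely what collapses the bispectral picture to finite dimension: with this choice the coefficient $x-a^{-1}q=x-q^{N}$ in $Y$ vanishes at $x=x_{N-1}=q^{N}$, just as $x-q$ vanishes at $x=x_{0}=q$, so $X$, $Y$ and $Z$ all preserve the $N$-dimensional space $V$ of functions on the grid $\{x_{0},\dots,x_{N-1}\}$; dually, the factor $(q^{1-N};q)_{s}$ in $w_{s}$ kills the weight for $s\geq N$, so the sum in \eqref{rel biorth2} is the natural bilinear pairing $\langle f,g\rangle_{w}:=\sum_{s}w_{s}f(x_{s})g(x_{s})$ on $V$. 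For generic $q,b$ the polynomials $P_{0},\dots,P_{N-1}$, having distinct degrees and nonzero leading coefficients, form a basis of $V$, and so do the Laurent polynomials $R_{0},\dots,R_{N-1}$, of degrees $0,\dots,N-1$ in $q^{2}/(ax)$. Writing $P_{n}(x)=\sum_{k=0}^{n}C_{n,k}x^{k}$ (monic) and $R_{m}(x)=\sum_{j=0}^{m}D_{m,j}x^{-j}$, one has $\langle P_{n},R_{m}\rangle_{w}=\sum_{k,j}C_{n,k}D_{m,j}\,c_{k-j}$ with two-sided moments $c_{r}:=\sum_{s}w_{s}x_{s}^{r}$, so \eqref{rel biorth2} is equivalent to: \emph{(i)} $\langle x^{j},R_{m}\rangle_{w}=0$ for $0\leq j<m$; \emph{(ii)} $\langle P_{n},x^{-j}\rangle_{w}=0$ for $0\leq j<n$; \emph{(iii)} $\langle P_{n},R_{n}\rangle_{w}=h_{n}$. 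Conditions (i) and (ii) make $R_{m}$ and $P_{n}$, respectively, the orthogonal families attached to $\{c_{r}\}$ from the two sides, and together they force $\langle P_{n},R_{m}\rangle_{w}=0$ whenever $n\neq m$.

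\textbf{First route (moments and Baxter's system).} One evaluates $c_{r}=\sum_{s=0}^{N-1}w_{s}q^{(s+1)r}$ in closed form: this is a terminating ${}_{1}\phi_{0}$ summed by the $q$-binomial theorem, and the resulting product exhibits $\{c_{r}\}_{r\in\mathbb{Z}}$ as the two-sided moment sequence of the $q$-beta functional underlying the Pastro polynomials, truncated by $a=q^{1-N}$. One then invokes the standard theory of Laurent biorthogonal polynomials \cite{Hen-Ros86,Ism-Mas95,Zhe98} together with Baxter's recurrence system \cite{Bax61}: a two-sided moment sequence determines two families, a ``denominator'' family obeying a recurrence of the form \eqref{rel rec Pastro}, matched to $P_{n}$ through the explicit $\mu_{n}^{(1)},\mu_{n}^{(2)}$ of \eqref{mu}, and an associated family of the second kind, matched to the $R_{n}$ of \eqref{full poly Q}; the biorthogonality $\langle P_{n},R_{m}\rangle_{w}=h_{n}\delta_{mn}$ is then automatic, and $h_{n}$ of \eqref{full norm h} is read off from $\langle P_{n},R_{n}\rangle_{w}$ on leading terms or by iterating the recurrence. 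Equivalently, (i) and (ii) can be verified head-on, each being a double $q$-sum whose inner sum is a $q$-Chu--Vandermonde evaluation.

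\textbf{Second route (adjoint operators).} One computes the adjoints $X^{\dagger}$, $Y^{\dagger}$, $Z^{\dagger}$ on $(V,\langle\cdot,\cdot\rangle_{w})$; each is again bidiagonal on the grid, with off-diagonal entries involving the ratios $w_{s\pm1}/w_{s}$, and the explicit form of the weight \eqref{full eq weight} is exactly what makes $X^{\dagger},Y^{\dagger}$, after an explicit diagonal gauge transformation $f(x_{s})\mapsto g_{s}f(x_{s})$, a Pastro-type GEVP pair in the reversed variable $q^{2}/(ax)$ with $b$ replaced by $ab^{-1}q$ — so that, by the Proposition of Section \ref{sect Pastro GEVP} applied with these parameters and a contiguity relation, the $R_{m}$ of \eqref{full poly Q} span the $\lambda_{m}$-eigenlines of $\mathcal{K}^{\dagger}$, where $\mathcal{K}:=X^{-1}Y$ (well defined since $X$ is invertible on $V$ for generic $b$). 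The GEVP \eqref{GEVP XY} makes $\{P_{n}\}$ the eigenbasis of $\mathcal{K}$ with the distinct eigenvalues $\lambda_{n}=-b^{-1}q^{n}$ of \eqref{lambda}; hence left- and right-eigenvectors for distinct eigenvalues pair to zero, giving $\langle P_{n},R_{m}\rangle_{w}=0$ for $n\neq m$, and $h_{n}=\langle P_{n},R_{n}\rangle_{w}$ follows from leading coefficients or from the relations \eqref{act rec X}--\eqref{act rec Z}.

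The conceptual content is light; the work is the bookkeeping, and the main obstacle is pinning down the weight. One must show that the adjoint of $X$ (equivalently of $Z$ or $Y$) with respect to $\langle\cdot,\cdot\rangle_{w}$ is the claimed dual operator, which determines $w_{s}$ up to a multiplicative constant and is a genuine, if routine, computation with the $q$-shifts on the points $x_{s}=q^{s+1}$. The remaining friction is (a) the terminating $q$-hypergeometric summations for the moments $c_{r}$ and for $h_{n}$ (instances of the $q$-binomial theorem and of $q$-Chu--Vandermonde), and (b) in the first route, importing the Laurent-biorthogonal-polynomial formalism with its bilateral-moment indexing and reconciling it with the normalizations fixed in \eqref{def Pastro}, \eqref{full eq weight} and \eqref{full poly Q}. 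One should also record that genericity of $q$ and $b$ keeps every leading coefficient and every $h_{n}$ nonzero, so that the two families are honest bases of $V$ and the biorthogonality is non-degenerate.
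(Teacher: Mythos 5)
Your proposal is correct in outline and its two routes track the paper's own two derivations (Baxter's system in Subsection \ref{ssec:Baxter}, adjoint operators in Subsection \ref{ssec:operators}), but the logical direction of your first route is genuinely different. The paper does not verify a pre-given weight: it invokes Zhedanov's theorem on Baxter's recurrence system, which says that once $h_N=\prod_{k=0}^{N-1}(1-\alpha_k\beta_k)=0$ with $h_n\neq0$ for $n<N$ and $P_N$ has simple zeros, the discrete biorthogonality holds on the zero set of $P_N$ with weight $w_s=h_{N-1}/\bigl(P_N'(x_s)Q_{N-1}(x_s^{-1})\bigr)$. That theorem is what \emph{produces} the constraint \eqref{restric 1} (from $h_N=0$ applied to \eqref{full norm h}), the grid \eqref{q-grid} (as the zeros of $P_N$, located via the $q$-binomial theorem), and the closed form \eqref{full eq weight}; the only remaining work is matching $\alpha_n,\beta_n$ against $\mu_n^{(1)},\mu_n^{(2)}$ from \eqref{mu}. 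You instead take \eqref{full eq weight} and \eqref{full poly Q} as given and verify the one-sided orthogonality conditions through the moments, which do evaluate to $c_r=q^r(bq^r;q)_{N-1}/(b;q)_{N-1}$ by the $q$-binomial theorem; this route closes, but note that the resulting sums are balanced terminating ${}_3\phi_2$'s handled by $q$-Pfaff--Saalsch\"utz rather than $q$-Chu--Vandermonde: for instance $\langle P_n,x^{-j}\rangle$ is proportional to ${}_3\phi_2\bigl(q^{-n},b,bq^{N-1-j};bq^{N-n},bq^{-j};q,q\bigr)$, whose evaluation carries the factor $(q^{1+j-n};q)_n$ vanishing precisely for $0\le j<n$. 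What your verification buys is self-containedness (no appeal to the cited theorem); what it loses is any explanation of why the grid, the weight and the restriction on $a$ are what they are, and it leaves more $q$-summation bookkeeping, including the diagonal case \eqref{full norm h}. Your second route coincides with the paper's operator derivation, and like it only recovers the $m\neq n$ cases with the weight presupposed (your remark that the adjoint computation ``determines $w_s$'' is really an ansatz, not a derivation); your preliminary reduction of \eqref{rel biorth2} to the two one-sided conditions plus a normalization is sound and standard for Laurent biorthogonal polynomials.
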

In Subsection \ref{ssec:Baxter}, the discrete biorthogonality relation for the Pastro polynomials on the real line as described in Proposition \ref{prop:biorth} will be obtained in a systematic manner by making use of a recurrence system proposed by G. Baxter. Then, with hindsight from the known result, the case $m\neq n$ of this biorthogonality relation will be recovered in Subsection \ref{ssec:operators} using the pair of $q$-difference operators $X,Y$ and the properties of their adjoints.  

\subsection{Derivation from recurrence system} \label{ssec:Baxter}

\quad As mentioned in the introduction, the Pastro polynomials are an example of polynomials satisfying a biorthogonality relation on the unit circle. In consequence, they should obey the following recurrence system proposed by G. Baxter \cite{Bax61}:
\begin{align}
    & \label{rec sys 1} P_{n+1}(x) = xP_n(x) - \alpha_nx^nQ_n(x^{-1}), \\
    & \label{rec sys 2} Q_{n+1}(x) = xQ_n(x) - \beta_nx^nP_n(x^{-1}), \\
    & \label{rec sys 3} P_0(x) = Q_0(x) = 1,
\end{align}
where $\alpha_n$ and $\beta_n$ are the recurrence coefficients. This system involves two sets of polynomials, namely the Pastro polynomials $P_n(x)$ and some polynomials $Q_n(x)$. We can define the constant $h_n$ as follows
\begin{equation}
    h_n = \prod_{k=0}^{n-1}(1-\alpha_k\beta_k), \quad h_0 = 1. \label{norm h}
\end{equation}
For $N$ any positive integer, consider the conditions
\begin{align}
    &h_n \neq 0, \quad h_N = 0, \quad \text{for } n=0,1,\dots,N-1, \label{cond 1} \\
    &P_N(x_s) = 0, \quad P_N'(x_s) \neq 0, \quad \text{for } s=0,1,\dots,N-1, \label{cond 2}
\end{align}
where $x_s$ are all the zeros of the polynomial $P_N(x)$. It has been shown in \cite{Zhe99} that if conditions \eqref{cond 1} and \eqref{cond 2} are satisfied in addition of equations \eqref{rec sys 1}--\eqref{rec sys 3}, then the following biorthogonality relation holds for $m,n = 0, 1, ..., N-1$
\begin{equation}
    \sum_{s=0}^{N-1}w_sP_n(x_s)Q_m(x_s^{-1}) = h_n\delta_{mn}, \label{rel biorth}
\end{equation}
where the normalization constant $h_n$ is given in \eqref{norm h} and the weight function $w_s$ is
\begin{equation}
    w_s = \frac{h_{N-1}}{P_N'(x_s)Q_{N-1}(x_s^{-1})}. \label{eq weight}
\end{equation}
Our goal for the remainder of this subsection is to find explicit expressions for the objects which appear in \eqref{rel biorth}, hence leading to the biorthogonality relation of Proposition \ref{prop:biorth}.

We begin by finding the recurrence coefficients $\alpha_n$ and $\beta_n$ of Baxter's system \eqref{rec sys 1}--\eqref{rec sys 3}. The first relation \eqref{rec sys 1} can be solved for $Q_n(x^{-1})$, giving
\begin{equation}
    Q_n(x^{-1}) = \frac{xP_n(x) - P_{n+1}(x)}{\alpha_nx^n}. \label{poly Q}
\end{equation}
Substituting \eqref{poly Q} into the second relation \eqref{rec sys 2}, one gets the following three-term recurrence relation
\begin{equation}
    P_{n+1}(x) - \frac{\alpha_n}{\alpha_{n-1}}P_n(x) = x \left( P_n(x) + \left(\alpha_n\beta_{n-1} - \frac{\alpha_n}{\alpha_{n-1}}\right) P_{n-1}(x)\right). \label{rel rec Pastro 2}
\end{equation}
Comparing \eqref{rel rec Pastro 2} with the recurrence relation \eqref{same rec rel Pastro} of the Pastro polynomials, one deduces
\begin{align}
    & \label{rec coeff alpha} \alpha_n = -\alpha_{n-1}\mu_n^{(1)}, \\
    & \label{rec coeff beta} \beta_n = \frac{\mu_{n+1}^{(2)} - \mu_{n+1}^{(1)}}{\alpha_{n+1}},
\end{align}
where the coefficients $\mu_n^{(1)}$ and $\mu_n^{(2)}$ are given in \eqref{mu}. 
Equation \eqref{rec coeff alpha} is a two-term recurrence relation for the coefficient $\alpha_n$ which can be solved up to the factor $\alpha_0$. This factor can be fixed by taking $n=0$ in \eqref{poly Q} and then using the initial condition $Q_0(x)=1$ from relation \eqref{rec sys 3} together with the explicit expression \eqref{def Pastro} of the Pastro polynomials. The result is that the coefficient $\alpha_n$ is given by
\begin{equation}
    \alpha_n = -(a^{-1}bq)^{n+1}\frac{(ab^{-1};q)_{n+1}}{(b;q)_{n+1}}. \label{full rec coeff alpha}
\end{equation}
For the coefficient $\beta_n$, by substituting \eqref{full rec coeff alpha} into \eqref{rec coeff beta}, one obtains after some simplifications
\begin{equation}
    \beta_n = -(a^{-1}b)^{-n-1}\frac{(bq^{-1};q)_{n+1}}{(ab^{-1}q;q)_{n+1}}. \label{full rec coeff beta}
\end{equation}
The explicit expressions of the normalization constant $h_n$ and of the biorthogonal partners $R_n(x):=Q_n(x^{-1})$ can now be straightforwardly computed using \eqref{full rec coeff alpha} and \eqref{full rec coeff beta} in \eqref{norm h} and \eqref{poly Q}. One arrives precisely at the results \eqref{full norm h}
and \eqref{full poly Q}.

It is observed from \eqref{full norm h} that the condition \eqref{cond 1} is only satisfied when $a = q^{1-N}$, which is equation \eqref{restric 1}. Under this restriction and with the help of the $q$-binomial theorem \cite{Koekoek}
\begin{equation}
    _1\phi_0 \ (q^{n};-;q,z) = (q^{-n}z;q)_n, \label{bin trm}
\end{equation}
the zeros $x_s$ of $P_N(x)$ are then found to be given by equation \eqref{q-grid} and to be simple, thereby satisfying condition \eqref{cond 2}. We are finally ready to calculate the weight function $w_s$. Still under the restriction $a = q^{1-N}$, by using its given form \eqref{eq weight}, a straightforward calculation leads to the expression \eqref{full eq weight}. Note that the weight function $w_s$ is normalized, that is $\sum_{s=0}^{N-1}w_s = 1$, as can be verified with the help of the $q$-binomial theorem \eqref{bin trm}.  This ends the proof of Proposition \ref{prop:biorth}.

\begin{rem}
	Under the transformation
\begin{equation}
    \begin{aligned}
    \cT : \ & b \mapsto b^{-1}q^{2-N}, \\
    & s \mapsto N-s-1,
    \end{aligned} \label{transf T}
\end{equation}
the weight function $w_s$ remains unchanged.
\end{rem}

\subsection{Derivation from operators} \label{ssec:operators}

We will now recover the biorthogonal partners $R_n(x)$ of the Pastro polynomials that are given in \eqref{full poly Q} using the $q$-difference operators $X$ and $Y$. We will restrict for the remainder of the present subsection the parameter $a$ to be as in \eqref{restric 1}, with $N$ any positive integer. We will also restrain the variable $x$ to take values only on the $q$-grid $x_s$ given in \eqref{q-grid}.

Let us equip the space of real Laurent polynomials $\cL$ involving the parameter $b$ and defined on the $q$-grid $x_s$ with the scalar product
\begin{equation}
    (f(x_s),g(x_s))_{(b)} = \sum_{s=0}^{N-1} w_s^{(b)}f(x_s)g(x_s), \label{scalar prod}
\end{equation}
where $f,g \in \cL$ and $w_s^{(b)}$ is the weight function \eqref{full eq weight}. The dependency of the scalar product and the weight function on the parameter $b$ will be omitted in what follows. For a given operator $W$ acting on the space $\cL$, its adjoint $W^*$ is defined to be the operator that satisfies
\begin{equation}
    (Wf(x_s),g(x_s)) = (f(x_s),W^*g(x_s)). \label{adj id}
\end{equation}
Consider now the adjoint of the GEVP \eqref{GEVP XY}:
\begin{equation}
    Y^*P_n^*(x_s) = \lambda_n^*X^*P_n^*(x_s), \label{adj GEVP XY}
\end{equation}
where $\lambda_n^*$ are the adjoint eigenvalues coinciding with $\lambda_n = -b^{-1}q^{n}$ and $P_n^*(x_s)$ stands for the solution of \eqref{adj GEVP XY}. It is known from \cite{Zhe99GEVP} (see also proof of Proposition 4 in \cite{BGVZ} for instance) that
\begin{equation}
	(P_n(x_s),X^*P_m^*(x_s)) = 0 \quad \text{for } m\neq n,
\end{equation}
meaning that the functions
\begin{equation}
    R_n(x_s) = X^*P_n^*(x_s) \label{eq R}
\end{equation}
are biorthogonal partners for the Pastro polynomials $P_n(x_s)$. We shall now obtain the explicit form of $R_n(x_s)$ by making use of the GEVP \eqref{GEVP XY} and the properties of the adjoint operators $X^*$ and $Y^*$ contained in the adjoint GEVP \eqref{adj GEVP XY}. Before we begin, note that under the constraint \eqref{restric 1} and the restriction of $x$ to the $q$-grid \eqref{q-grid}, the operators $X$ and $Y$ become
\begin{align}
    & \label{restric def X} X = q(q^s-1)T^- + q(1-bq^s)\cI, \\
    & \label{restric def Y} Y = (q^{s+1}-q^N)T^+ + (q^N-b^{-1}q^{s+1})\cI,              
\end{align}
with $T^{\pm}f(x_s) = f(x_{s \pm 1})$ for $s=0,1,\dots,N-1$, using the convention $f(x_{-1})=f(x_{N})=0$. Also note that equation \eqref{ctg rel X} translates to
\begin{equation}
    XP_n(x_s) =(q^{-n}-b)q^{s+1}\cS P_n(x_s), \label{restric ctg rel X}
\end{equation}
where we recall that the transformation $\cS$ is defined in \eqref{transf S}.

Let us start by finding the expressions of the adjoint shift operators $(T^{\pm})^*$. This is easily done with the use of the identity \eqref{adj id} and the scalar product \eqref{scalar prod}. The result is
\begin{align}
    & (T^{+})^* = \frac{w_{s-1}}{w_s} T^- = \frac{q(1-q^s)}{b(q^N-q^s)}T^-, \label{adj Tp} \\
    & (T^{-})^* = \frac{w_{s+1}}{w_s} T^+  = \frac{b(q^N-q^{s+1})}{q(1-q^{s+1})}T^+. \label{adj Tm}
\end{align}
Using the restricted forms \eqref{restric def X}--\eqref{restric def Y} of the operators $X,Y$ and the results \eqref{adj Tp}--\eqref{adj Tm}, one obtains the adjoint operators $X^*$ and $Y^*$:
\begin{align}
    & X^* = b(q^{s+1}-q^N)T^+ + q(1-bq^s)\cI, \\
    & Y^* = b^{-1}q(q^s-1)T^- + (q^N-b^{-1}q^{s+1})\cI.
\end{align}
Consider now the transformation
\begin{equation}
    \begin{aligned}
    \tau : \ & b \mapsto b^{-1}q^{1-N}, \\
    & s \mapsto N-s-1, \\
    & T^{\pm} \mapsto T^{\mp}.
    \end{aligned} \label{tranf tau}
\end{equation}
One can observe that the action of $\tau$ on $b$ and $s$ is equivalent to the action of $\cS \circ \cT$, see equations \eqref{transf S} and \eqref{transf T}, and that $\tau^2$ is the identity transformation. Note that $\tau$ has been chosen so that for any $q$-difference operator $W$ and any function $f\in\cL$, the following relation holds
\begin{equation}
	\tau(Wf(x_s))=\tau(W)\tau(f(x_s)).
\end{equation}
By applying the transformation $\tau$ to the operators $X$ and $Y$ given in \eqref{restric def X} and \eqref{restric def Y}, resulting in the transformed operators $\tau(X)$ and $\tau(Y)$, a relation with the adjoint operators $X^*$ and $Y^*$ shows up:
\begin{align}
    &X^*= -bq^s\tau(X), \label{act trans X} \\
    &Y^*= -b^{-1}q^{s+1-N}\tau(Y). \label{act trans Y}
\end{align}
On one hand, relations \eqref{act trans X} and \eqref{act trans Y} can be used to rewrite the adjoint GEVP \eqref{adj GEVP XY} with eigenvalue $\lambda_n^* =\lambda_n= -b^{-1}q^{n}$ as
\begin{equation}
    \tau(Y)P_n^*(x_s)=-bq^{n+N-1}\tau(X)P_n^*(x_s). \label{trans GEVP XY}
\end{equation}
On the other hand, one can apply the transformation $\tau$ on both sides of the GEVP \eqref{GEVP XY}. The resulting equation can be written as
\begin{equation}
	\tau(Y)\tau(P_n(x_s)) = \tau(\lambda_n)\tau(X)\tau(P_n(x_s)). \label{trans GEVP XY 2}
\end{equation}
Using again the expression of the eigenvalues $\lambda_n = -b^{-1}q^{n}$ and the transformation \eqref{tranf tau}, it is found that
\begin{equation}
    \tau(\lambda_n) = -bq^{n+N-1},
\end{equation}
which is the factor appearing in \eqref{trans GEVP XY}. Therefore, both equations \eqref{trans GEVP XY} and \eqref{trans GEVP XY 2} are GEVPs for the transformed operators $\tau(X)$ and $\tau(Y)$ with the same eigenvalues. By comparison of these two equations, it follows that
\begin{equation}
    P_n^*(x_s)=\nu_n\tau(P_n(x_s)),  \label{adPtauP}
\end{equation}
where $\nu_n$ is some normalization which does not depend on $s$.
It is now possible to compute the biorthogonal partners $R_n(x_s)$. Combining equations \eqref{eq R}, \eqref{act trans X} and \eqref{adPtauP}, one gets 
\begin{equation}
    R_n(x_s) = -bq^s\nu_n \tau(X)\tau(P_n(x_s))=-bq^s\nu_n \tau(XP_n(x_s)). \label{restric eq R}
\end{equation}
Recall the transformation $\cT$ defined in \eqref{transf T}. Applying $\tau$ on both sides of equation \eqref{restric ctg rel X} and using the fact that $\tau \circ \cS = \cT$ yields
\begin{equation}
    \tau(XP_n(x_s)) = q^{-s}(q^{N-n}-b^{-1}q)\cT P_n(x_s). \label{act tY Pastro}
\end{equation}
After substituting \eqref{act tY Pastro} into the RHS of \eqref{restric eq R} and absorbing all factors which do not depend on $s$ in $\nu_n$, one finally finds the expression of the biorthogonal partners to be
\begin{equation}
    R_n(x_s)= \nu_n\cT P_n(x_s)=\nu_nP_n(q^{N-s};q^{1-N},b^{-1}q^{2-N};q). \label{RnPn}
\end{equation}
It can be verified that the function $R_n(x_s)$ given in \eqref{RnPn} corresponds up to a normalization factor to the function $R_n(x;a,b;q)$ given in \eqref{full poly Q} under the constraint \eqref{restric 1} for $a$ and the restriction of $x$ on the $q$-grid \eqref{q-grid}. Therefore, we have recovered the biorthogonal partners of the Pastro polynomials using $q$-difference operators and GEVPs.

\section{Pastro algebra} \label{sect Pastro alg}

In the case of the classical orthogonal polynomials of the ($q$-)Askey scheme, it is known that the associated pair of bispectral operators satisfy an algebra of Askey--Wilson type \cite{Zhe91}. In this section, we identify the algebra generated by the triplet of bispectral operators of the Pastro polynomials; we will call this the Pastro algebra. Connections with the Askey--Wilson algebra and its specialization the $q$-Hahn algebra will be made.       

By direct computation using definitions \eqref{def X}--\eqref{def Z}, it is found that the operators $X,Y,Z$ obey the following relations
\begin{align}
    & \label{com rel XY} qXY - YX =q(q-1)\left( a^{-1}X + Y \right), \\
    & \label{com rel YZ} qYZ - ZY = (q-1)\left( -b^{-1}q^{-1}(1+q)X - bY + a^{-1}qZ + a^{-1}b^{-1}(1-b)(a-bq)\right), \\
    & \label{com rel ZX} qZX - XZ = (q-1)\left(-bX + qZ\right). 
\end{align}
Under the affine transformations
\begin{align}
    & X \mapsto \frac{a}{bq(q-1)}X - \frac{a}{b(q-1)}, \\
    & Y \mapsto bq^{-1}Y - a^{-1}b, \\
    & Z \mapsto -a^{-1}Z - a^{-1}b,
\end{align}
the relations \eqref{com rel XY}--\eqref{com rel ZX} become
\begin{align}
    & qXY - YX = 1, \label{rel Pastro 1} \\
    & qYZ - ZY = \alpha_1X + \alpha_2, \label{rel Pastro 2} \\
    & qZX - XZ = 1, \label{rel Pastro 3}
\end{align}
where 
\begin{equation}
    \alpha_1 = a^{-2}bq^{-1}(q-1)^2(q+1), \quad \alpha_2 = a^{-2}q^{-1}(q-1)(ab+aq+bq).
\end{equation}
The Pastro algebra can then be observed to be a special case of the Askey--Wilson algebra \cite{Zhe91}. We comment on this below.

The usual Casimir element of the Askey--Wilson algebra (see \cite{Vin-Zhe10} for instance) takes the following form in the specialization \eqref{rel Pastro 1}--\eqref{rel Pastro 3}: 
\begin{equation}
    Q = (q^{-2}-1)XYZ + q^{-1}\alpha_1X^2 + q^{-1}(q^{-1}+1)(\alpha_2X + q^{-1}Y + Z). \label{casimir}
\end{equation}
In other words, the element $Q$ given above is central in the Pastro algebra, as can be verified directly using relations \eqref{rel Pastro 1}--\eqref{rel Pastro 3}.

\subsection{The Askey--Wilson algebra and biorthogonal functions}

Let us recall the $\mathbb{Z}_3$ presentation of the Askey--Wilson algebra \cite{WZ, IT, CFGPRV}:
\begin{align} 
    & qXY - YX = \beta_1 Z + \beta_2, \label{rel AW 1} \\
    & qYZ - ZY = \alpha_1X + \alpha_2, \label{rel AW 2} \\
    & qZX - XZ = \delta_1Y + \delta_2, \label{rel AW 3}
\end{align}

These relations can be standardized by affine transformations in non-singular parametric situations. Manifestly the algebra associated to the Pastro polynomials and defined by \eqref{rel Pastro 1}--\eqref{rel Pastro 3} is a special case of the Askey--Wilson algebra with $\beta_1=0, \beta_2=1$ and $\delta_1=0$,  $\delta_2=1$. A first significant observation is that the Askey--Wilson algebra (and its degenerations) does not only provide algebraic interpretations of the orthogonal polynomials of the Askey scheme but also, as is the case here, of biorthogonal polynomials. 

As we shall indicate in the next subsection, the situation where either $\beta_1$ or $\delta_1$ is equal to $0$ (but not both) corresponds to the algebra describing the bispectrality of the $q$-Hahn polynomials. It is hence tempting to ask to what orthogonal polynomials does correspond the algebra defined by \eqref{rel Pastro 1}--\eqref{rel Pastro 3} where both $\beta_1$ and $\delta_1$ are equal to $0$. It is interesting to realize that this is an ill-founded question and that there are no finite families of orthogonal polynomials whose bispectrality is encoded in this algebra. Indeed, discussing orthogonal polynomials in this algebraic framework, one looks at representations in bases where one of the generator is diagonal. (Recall that we have considered here in contradistinction generalized eigenbases.) On the one hand, the relations \eqref{rel Pastro 1} and \eqref{rel Pastro 3} are each recognized as defining a $q$-oscillator algebra. It is known \cite{MS,TVZ17}, that in any finite-dimensional irreducible representation of this algebra, the generators have a  spectrum of exponential type $q^{\pm{k}}$ and such must hence be the case for $X,Y,Z$ in view of \eqref{rel Pastro 1} and \eqref{rel Pastro 3}. On the other hand, given \eqref{rel Pastro 2}, when $\alpha_1\neq 0$, the representation theory of the Askey--Wilson algebra \cite{Zhe91} shows that in finite dimensions $Z$ must have a spectrum of the form $c_1q^k+c_2q^{-k}$ where the constants $c_1$ and $c_2$ are such that $c_1c_2\neq0$. This is in contradiction with the preceding assertion. Note however that this conundrum does not arise when $\alpha_1=0$ or in the case of the $q$-Hahn algebra to be discussed below where $\beta_1$ or $\delta_1$ is $\neq 0$ and there is only one $q$-oscillator relation.


\subsection{Connection with the $q$-Hahn algebra} \label{sect Hahn alg}

One distinctive feature of the Pastro algebra is that it contains the $q$-Hahn algebra. The $q$-Hahn algebra is associated to the $q$-Hahn polynomials and is defined by three generators $A$, $B$ and $C$ subject to the relations
\begin{align}
    & \label{com rel AB} qAB - BA = C + \gamma_1, \\
    & \label{com rel BC} qBC - CB = \gamma_2, \\
    & \label{com rel CA} qCA - AC = \gamma_3B + \gamma_4,
\end{align}
where $\gamma_i$ for $i = 1,2,3,4$ are arbitrary real constants. In the realization of the $q$-Hahn algebra by bispectral operators, these coefficients $\gamma_i$ take explicit values involving the parameters of the $q$-Hahn polynomials. In what follows, it will be useful to view the coefficients $\gamma_i$ as central elements instead.

Let us now describe the embedding of the $q$-Hahn algebra into the Pastro algebra by introducing the linear pencil
\begin{equation}
    L = X + \mu Y,
\end{equation}
where $\mu$ is an arbitrary real parameter. We define an operator $M$ in terms of the operators $L$ and $Z$ as follows:
\begin{equation}
    M = qLZ - ZL - \mu\alpha_2 - 1.
\end{equation}
It can be verified that the elements $L,M,Z$ together with the central element $Q$ given in \eqref{casimir} generate an algebra with the following relations:
\begin{align}
    & qLZ - ZL= M + \mu\alpha_2 + 1, \label{com rel LZ} \\
    & qZM - MZ = \mu\alpha_1, \label{com rel ZM} \\
    & qML - LM = \mu q^{-1}(q+1)^2(q-1)Z - \mu q^{2}(q-1)Q + \mu^2\alpha_1. \label{com rel ML}
\end{align}
Comparing \eqref{com rel AB}--\eqref{com rel CA} with \eqref{com rel LZ}--\eqref{com rel ML}, it is then seen that the $q$-Hahn algebra is embedded in the Pastro algebra under the mappings
\begin{align}
	& \gamma_1 \mapsto \mu\alpha_2 + 1, \quad \! \gamma_2 \mapsto \mu\alpha_1, \quad \! \gamma_3 \mapsto \mu q^{-1}(q+1)^2(q-1), \quad \! \gamma_4 \mapsto -\mu q^{2}(q-1)Q + \mu^2\alpha_1,\\
    & A \mapsto L, \quad B \mapsto Z, \quad C \mapsto M.
\end{align}

\section{Conclusion} \label{sect conc}

In summary, we have introduced a triplet of $q$-difference operators that plays a role similar to the pair of bispectral operators of the classical OPs, but for biorthogonal polynomials instead. We have then shown through two GEVPs how these operators provide a description of the bispectral property of the Pastro polynomials. We have also shown how to obtain a discrete biorthogonality relation of the Pastro polynomials on the real line using two methods; the first is by exploiting a recurrence system proposed by G. Baxter and the second is purely with the use of a pair of operators and the properties of their adjoints. The underlying algebra of the Pastro polynomials, called Pastro algebra, was then obtained and observed to be a special case of the Askey--Wilson algebra. A connection with the $q$-Hahn algebra has been established by showing its embedding into the Pastro algebra. 

One natural direction for future research would be to consider the Pastro algebra from a more abstract point of view and study its representation theory, in a similar manner as in \cite{VZ_Hahn,VZ_Ask}. It is expected that the Pastro biorthogonal polynomials will appear as overlap coefficients between bases associated to eigenvalue or generalized eigenvalue problems. Another direction would be to pursue the program of providing an algebraic treatment of other families of biorthogonal functions such as those given in \cite{GM,Wil91}. The next step could be to examine the ${}_4\phi_3$ level. Finally, relating both directions, the presence of the Askey--Wilson algebra in this larger picture of bispectral biorthogonal functions would merit more investigations. We hope to study these aspects in the future.     

\paragraph{Acknowledgments.} VVC held an Undergraduate Student Research Award (USRA) from the Natural Sciences and Engineering Research Council (NSERC) of Canada. The research of LV is supported by a Discovery Grant from the NSERC. MZ holds an Alexander--Graham--Bell graduate scholarship from the NSERC.

\end{document}